\newtheorem{theorem}{Theorem}[section]
\newtheorem{lemma}[theorem]{Lemma}
\newtheorem{proposition}[theorem]{Proposition}
\theoremstyle{definition}
\newtheorem{definition}[theorem]{Definition}
\theoremstyle{remark}
\newtheorem{remark}[theorem]{Remark}
\theoremstyle{example}
\theoremstyle{note}
\numberwithin{equation}{section}
\newcommand{\stirling}[2]{\genfrac{[}{]}{0pt}{1}{#1}{#2}}
\newcommand{\Stirling}[2]{\genfrac{[}{]}{0pt}{0}{#1}{#2}}
\DeclareMathOperator{\GL}{GL}
\DeclareMathOperator{\Span}{Span}
\DeclareMathOperator{\M}{M}
\DeclareMathOperator{\tr}{Tr}
\DeclareMathOperator{\Rank}{Rank}
\begin{document}
\title{On the cardinality of matrices with prescribed rank and partial trace  over a finite field}

\author{Kumar Balasubramanian} 
\thanks{Research of Kumar Balasubramanian is supported by the SERB grant: CRG/2023/000281.}
\address{Kumar Balasubramanian\\
Department of Mathematics\\
IISER Bhopal\\
Bhopal, Madhya Pradesh 462066, India}
\email{bkumar@iiserb.ac.in}

\author{Krishna Kaipa} 
\address{Krishna Kaipa\\
Department of Mathematics\\
IISER Pune\\
Pune, Maharashtra 411008, India}
\email{kaipa@iiserpune.ac.in}

\author{Himanshi Khurana}
\address{Himanshi Khurana\\
Department of Mathematics\\
Harish-Chandra Research Institute\\
Prayagraj, Uttar Pradesh 211019, India}
\email{himanshikhurana@hri.res.in}

\keywords{Rank, Trace, Cardinality, Generating Function}
\subjclass{Primary: 15A03, 15A15}

\maketitle

\begin{abstract} Let $F$ be the finite field of order $q$ and $\M(n,r, F)$ be the set of $n\times n$ matrices of rank $r$ over the field $F$.  For $\alpha\in F$ and $A\in \M(n,F)$, let $$Z^{\alpha}_{A,r}=\left\{X\in \M(n,r, F)\mid \tr(AX)=\alpha\right \}.$$ 
In this article, we solve the problem of determining the  cardinality of $Z_{A,r}^{\alpha}$. We also solve the generalization of  the problem to rectangular matrices.
\end{abstract}

\section{Introduction}

Let $F$ be the finite field of order $q$. For $n$ a positive integer and $0\leq r\leq n$, we let $\M(n,F)$ to be the vector space  of $n \times n$ matrices with entries in $F$, and $\M(n,r,F)$ to be the subset of $\M(n,F)$ consisting of rank $r$ matrices. For $\alpha \in F$ and $A\in \M(n,F)$, we let $$Z^{\alpha}_{A,r}=\left\{X\in \M(n,r, F)\mid \tr(AX)=\alpha\right \}.$$ 
For $1\leq k\leq n$, we define the \emph{$k$-th partial trace}, or in short the \textit{$k$-trace} of a matrix as the sum of its first $k$ diagonal entries. It is clear that for $X\in \M(n,F)$, the $k$-trace of the matrix $X$ is precisely $\tr(AX)$ where $A$ is the diagonal matrix with the first $k$ diagonal entries being $1$, and the last $(n-k)$ diagonal entries being $0$.  We let $Y_{n,r,k}^{\alpha}$ be the set of $n\times n$ matrices of rank $r$ whose  $k$-trace is $\alpha$. The main result of this paper is a computation of the cardinality of $Y_{n,r,k}^{\alpha}$. It is easy to see that the sets $Z_{A,r}^{\alpha}$ and $Y_{n,r, k}^{\alpha}$, where $k$ is the rank of $A$, have the same cardinality (see Proposition~\ref{cardinality of Z(A,r,alpha) is f(n,r,k,alpha)}). \\

Let $\psi:F \to \mathbb{C}^{\times}$ be a fixed non-trivial additive character of $F$. For $A\in \M(n,F),$ consider 
\begin{equation}\label{gauss sum}\sum_{X\in \M(n,r,F)}\psi(\tr(AX)). 
\end{equation}
In the case  $r=n$, the sum above runs over $\GL(n,F)$ and can be viewed as an analogue for $\GL(n,F)$ of the classical Gauss sums. 
In \cite{Eich}, Eichler computed the above sum \eqref{gauss sum} when $r=n$, and $A=I_{n}$  is the $n\times n$ identity matrix, and used it to compute the cardinality of $Z_{I_{n},n}^{\alpha}$. Eichler's result was reproved by Lamprecht in \cite{Lamp} and more recently by Kim in \cite{Kim} using different methods. See also Theorem 1 in \cite{Ful}  where a similar result is proved. In a more recent work \cite{LiHu}, Li and Hu computed the sum \eqref{gauss sum} for $r=n$ and arbitrary $A\in \M(n,F)$ generalizing the earlier works of Eichler, Lamprecht and Kim. In \cite{Philip}, Buckhiester counted the number of $n \times n $ matrices of rank $r$ and trace $\alpha$ over $F$. This cardinality was also counted independently by Prasad in \cite{Dip[1]} and was used to compute the dimension of a certain representation of $\GL(2n,F)$. In \cite{KumHim[3]}, Balasubramanian, Khurana and Dangodara  computed the cardinality of $n \times n$ matrices of rank $r$ and $1$-trace $\alpha$ over $F$, and used it to calculate the dimension of a certain representation of $\GL(2n,F)$. More recently,  a similar type of calculation was done by  Gorodetsky and Hazan  in \cite{HazGor}, generalizing the work of Prasad, to compute the dimension of a certain representation of $\GL(kn,F)$. Motivated by these works, in this article, we count the number of $n \times n $ matrices of rank $r$ and $k$-trace $\alpha$ over $F$ for $1 \leq k \leq n$. This calculation is motivated by an application to the computation of the dimension formula for a certain twisted Jacquet module of a cuspidal representation of $\GL(2n,F)$. We will discuss the dimension formula in a forthcoming paper. \\

For the sake of clarity, we mention below the statement of the main theorem. Let $f_{n,r,k}^{\alpha}$ denote the cardinality of $Y_{n,r,k}^{\alpha}$.
The cardinality of $\M(n,r,F)$ is denoted 
by $a(n,r,q)$ (see \eqref{eq:Landsberg}).  The Gaussian binomial coefficient ($q$-binomial coefficient) which counts the number of $r$-dimensional linear subspaces of the $n$-dimensional space $F^{n}$ is denoted as $\stirling{n}{r}_{q}$. \\

\begin{theorem}[Main Theorem]\label{main theorem} We have 
\begin{align*}
f_{n,r,k}^1&=  ( a(n,r,q)-g_{n,r,k}  )/q,\\
f_{n,r,k}^0&= ( a(n,r,q) + (q-1) g_{n,r,k})/q,
\end{align*}
where 
\[g_{n,r,k}=
\sum_{i=0}^{r}(-1)^{i} q^{\tbinom{i}{2} + k(r-i)} \,\stirling{k}{i}_{q} \,  a(n-k,r-i,q).\]
\end{theorem}
\vspace{0.2 cm}
The range of summation  in  the expression for $g_{n,r,k}$ above can be taken to be  $\max\{0, r-(n-k)\} \leq i \leq \min\{r,k\}$, as the  terms outside this range are zero. We also remark that 
$g_{n,r,k}=f_{n,r,k}^{0} - f_{n,r,k}^{1}$ equals the sum \eqref{gauss sum} when rank$(A)=k$. The result of the above Theorem can be  succinctly stated in terms of the polynomial generating functions 
$A_n(T), g_{n,k}(T) \in \mathbb{Z}[T]$ defined by  
\[ A_n(T)= \displaystyle \sum_{r=0}^n a(n,r,q)T^r, \qquad g_{n,k}(T)= \displaystyle \sum_{r=0}^n g_{n,r,k} T^r. \]
We have
\[
g_{n,k}(T) = A_{n-k}(q^kT)\, (1-T)(1-qT) \cdots(1-q^{k-1}T).\]
In fact, we first obtain this polynomial identity in Theorem \ref{g_(n,k)}, and use it to obtain the expression for $g_{n,r,k}$ given in Theorem \ref{main theorem}. The polynomial generating functions $f^\alpha_{n,k}(T) = \displaystyle \sum_{r=0}^n f^\alpha_{n,r,k} T^r$ are given by:
\[ f^1_{n,k}(T) = (A_n(T) - g_{n,k}(T))/q, \qquad  f^0_{n,k}(T) = (A_n(T) +(q-1) g_{n,k}(T))/q.\]



\section{Preliminaries}
Throughout, we let $F$ denote  the finite field of order $q$, and we denote by $\M(n,F)$  the set of $n\times n$ matrices with entries in $F$. For $0\leq r\leq n$, we let $\M(n,r,F)$ denote the subset of $\M(n,F)$ consisting of matrices of rank $r$. For $1\leq k\leq n$, and $\alpha \in F$, define
\[Y_{n,r,k}^{\alpha}= \left\{ X=(x_{ij}) \in \M(n,r, F) \mid \sum_{i=1}^{k} x_{ii}=\alpha \right\}.\] 
We write $f_{n,r,k}^{\alpha}$ for the cardinality of $Y_{n,r,k}^{\alpha}$. If $k=n$, to simplify notation, we will denote $Y_{k,r,k}^{\alpha}=Y_{k,r}^{\alpha}$ and $f_{k,r,k}^{\alpha}= f_{k,r}^{\alpha}$.

\begin{proposition}\label{cardinality of Z(A,r,alpha) is f(n,r,k,alpha)} Let $n,r$ be non-negative integers. For $A\in \M(n,F)$ and $\alpha\in F$, let 
\[Z^{\alpha}_{A,r}=\left\{X\in \M(n,r, F)\mid \tr(AX)=\alpha\right \}.\]
If $\Rank(A)=k$, then
\[|Z_{A,r}^{\alpha}|=f_{n,r,k}^{\alpha}. \]
\end{proposition}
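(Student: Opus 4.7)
The plan is to reduce the statement to the definition of $f_{n,r,k}^{\alpha}$ by a standard change-of-variable argument using the fact that rank-$k$ matrices form a single $\GL(n,F)\times\GL(n,F)$-orbit under the action $(g_1,g_2)\cdot A = g_1 A g_2^{-1}$.

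First, I would pick a convenient representative for the orbit: let $E_k\in\M(n,F)$ be the matrix whose first $k$ diagonal entries are $1$ and whose remaining entries are $0$. Since $\Rank(A)=k$, there exist $g_1,g_2\in\GL(n,F)$ with $g_1 A g_2^{-1}=E_k$, equivalently $A = g_1^{-1}E_k g_2$.

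Next, I would exploit the cyclicity of the trace. For any $X\in\M(n,F)$,
\[
\tr(AX) \;=\; \tr(g_1^{-1}E_k g_2 X) \;=\; \tr(E_k \, g_2 X g_1^{-1}).
\]
Now consider the map $\varphi:\M(n,F)\to\M(n,F)$ defined by $\varphi(X)=g_2 X g_1^{-1}$. Since $g_1,g_2\in\GL(n,F)$, the map $\varphi$ is a bijection of $\M(n,F)$ which preserves rank; in particular it restricts to a bijection of $\M(n,r,F)$ onto itself. Letting $Y=\varphi(X)=(y_{ij})$, the identity above gives $\tr(AX)=\tr(E_k Y)=\sum_{i=1}^{k} y_{ii}$, i.e.\ the $k$-trace of $Y$.

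Finally, putting these together, $\varphi$ restricts to a bijection between $Z_{A,r}^{\alpha}$ and $\{Y\in\M(n,r,F) : \sum_{i=1}^{k} y_{ii}=\alpha\} = Y_{n,r,k}^{\alpha}$, whence $|Z_{A,r}^{\alpha}|=f_{n,r,k}^{\alpha}$. There is no real obstacle here; the only point to be careful about is the order of multiplication when converting $g_1 A g_2^{-1}=E_k$ back to an expression for $\tr(AX)$, which is handled by the cyclicity of the trace as above.
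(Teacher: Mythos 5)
Your proposal is correct and follows essentially the same argument as the paper: write $g_1Ag_2^{-1}=E_k$ (the paper's $B$), use cyclicity of the trace, and observe that $X\mapsto g_2Xg_1^{-1}$ is a rank-preserving bijection carrying $Z_{A,r}^{\alpha}$ onto $Y_{n,r,k}^{\alpha}$. Your version is if anything slightly more careful, since you make explicit the rank-preservation that the paper leaves implicit in the word ``trivially.''
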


\begin{proof} Since $A\in \M(n,k,F)$, there exists $g_{1}, g_{2}\in \GL(n,F)$ such that $g_{1}Ag_{2}^{-1}=B$, where 
\[ B=\begin{bmatrix} I_{k} & 0 \\ 0 & 0\end{bmatrix}.\]
Consider the bijective map 
\[ \phi \colon  \M(n,r,F) \to \M(n,r,F)
,  \quad 
\phi(X)=g_{2}Xg_{1}^{-1}.\]
Since 
\[\tr(B\phi(X))= \tr(Bg_{2}Xg_{1}^{-1})= \tr(g_{1}^{-1}B{g_{2}}X)=\tr(AX),\]
it follows that $\phi$ carries 
$Z_{A,r}^{\alpha}$ bijectively to 
$Y_{n,r,k}^{\alpha}$.
\end{proof}

Before we proceed further, we set up some more notation and recall some more results that we need. We write $a(n,q)$ (respectively $a(n,r,q)$) for the cardinality of $\M(n,F)$ (respectively $\M(n,r,F)$). We let $\stirling{n}{r}_{q}$ denote the Gaussian binomial coefficient and $(T;q)_{n}$ for the $q$-Pochhammer symbol. For the sake of clarity, we record their definitions below.  \\

\begin{definition} Let $n$, $r$ be non-negative integers with $r\leq n$. The $q$-binomial coefficient is defined as  
\[\Stirling{n}{r}_{q}= \frac{[n]_{q}!}{[r]_{q}! [n-r]_{q}!} \]
where $[n]_{q}!= [1]_{q}[2]_{q}\cdots [n]_{q}$ and $[n]_{q}=\frac{q^{n}-1}{q-1}$. 
It is known that $\stirling{n}{r}_{q}$ is a  polynomial of degree $r(n-r)$ in $q$ which evaluates at $q=1$ to 
 the usual binomial coefficient $\tbinom{n}{r}$.
\end{definition}

\begin{definition} Let $n$ be a non-negative integer. We define 
\[(T;q)_{n}= \prod\limits_{i=0}^{n-1}(1-Tq^{i}),\quad  n>0. \]
We take $(T;q)_0=1$.
\end{definition}

We recall the $q$-binomial theorem which goes back to Cauchy. A proof can be found in \cite[\S 1.8]{Stanley1}.
\begin{theorem}[$q$-binomial theorem] \label{qbin_thm} We have 
\[(T;q)_{n}= \sum_{r=0}^{n}\Stirling{n}{r}_{q}(-1)^{r}q^{\tbinom{r}{2}}T^{r}.\]
\end{theorem}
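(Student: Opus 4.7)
My plan is to prove the $q$-binomial theorem by induction on $n$, using the standard ``Pascal-like'' recurrence for Gaussian binomial coefficients, namely
\[\Stirling{n}{r}_{q} = \Stirling{n-1}{r}_{q} + q^{n-r}\Stirling{n-1}{r-1}_{q},\]
which is a short verification from the definition given in the paper (it amounts to clearing denominators and checking a polynomial identity in $q$). The base case $n=0$ is immediate: $(X;q)_{0} = 1$ by definition, and the right-hand sum collapses to the single term $r=0$, which is also $1$.

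For the inductive step, I would start from the factorisation
\[(X;q)_{n} = (1 - Xq^{n-1})(X;q)_{n-1},\]
expand $(X;q)_{n-1}$ using the induction hypothesis, and then split the result into two sums. After re-indexing the second sum by $s = r+1$, the coefficient of $X^{r}$ for $1 \leq r \leq n-1$ becomes
\[(-1)^{r} q^{\tbinom{r}{2}} \Stirling{n-1}{r}_{q} + (-1)^{r} q^{\tbinom{r-1}{2} + n - 1} \Stirling{n-1}{r-1}_{q}.\]
Using $\tbinom{r-1}{2} - \tbinom{r}{2} = -(r-1)$, the second exponent simplifies to $\tbinom{r}{2} + (n-r)$, so pulling out $(-1)^{r} q^{\tbinom{r}{2}}$ reduces the inductive step to exactly the Pascal-like recurrence above.

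Finally, I would handle the boundary terms separately. For $r=0$, only the first sum contributes and gives $\Stirling{n-1}{0}_{q} = 1 = \Stirling{n}{0}_{q}$. For $r=n$, only the shifted sum contributes; here one checks $\tbinom{n-1}{2} + (n-1) = \tbinom{n}{2}$, which gives the desired leading coefficient $(-1)^{n} q^{\tbinom{n}{2}}$. I do not anticipate any real obstacle: the only mild bookkeeping is the exponent manipulation $\tbinom{r-1}{2} + n - 1 = \tbinom{r}{2} + n - r$ and careful treatment of the two endpoint indices, after which the identity closes under induction.
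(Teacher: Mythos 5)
Your proof is correct. Note, though, that the paper does not prove this statement at all: it simply cites Theorem 3.2 of the reference \cite{Chan}, so your argument is a self-contained substitute rather than a variant of the paper's proof. Your route is the standard induction: the base case $n=0$ is trivial, the factorisation $(X;q)_{n}=(1-Xq^{n-1})(X;q)_{n-1}$ plus the re-indexing $s=r+1$ is carried out correctly, and the exponent bookkeeping checks out, since $\tbinom{r}{2}-\tbinom{r-1}{2}=r-1$ gives $\tbinom{r-1}{2}+n-1=\tbinom{r}{2}+(n-r)$, so the coefficient of $X^{r}$ collapses to $(-1)^{r}q^{\tbinom{r}{2}}\bigl(\Stirling{n-1}{r}_{q}+q^{n-r}\Stirling{n-1}{r-1}_{q}\bigr)$, which is $(-1)^{r}q^{\tbinom{r}{2}}\Stirling{n}{r}_{q}$ by the Pascal-like recurrence you invoke; your boundary cases $r=0$ and $r=n$ (using $\tbinom{n-1}{2}+(n-1)=\tbinom{n}{2}$) are also right. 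The recurrence $\Stirling{n}{r}_{q}=\Stirling{n-1}{r}_{q}+q^{n-r}\Stirling{n-1}{r-1}_{q}$ is indeed a one-line verification from the paper's product definition: writing the common factor $C=\bigl(\prod_{i=1}^{r-1}(q^{n-i}-1)\bigr)/\bigl(\prod_{i=0}^{r-1}(q^{r-i}-1)\bigr)$, the right-hand side is $C\bigl((q^{n-r}-1)+q^{n-r}(q^{r}-1)\bigr)=C(q^{n}-1)$, as required. What your approach buys is independence from the external reference; what the citation buys the paper is brevity for a classical fact that is not the point of the article.
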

We note that substituting $q=1$ in the $q$-binomial  theorem  recovers the usual binomial theorem.

\begin{proposition} \label{Landsberg}
The number $ a(m \times n, r,q)$ of $m \times n$ matrices of rank $r$ over $F$  equals
\[ a(m \times n, r,q)=
\Stirling{n}{r}_q \Stirling{m}{r}_q  |\GL(r,F)|= \prod\limits_{i=0}^{r-1}\frac{(q^{m}-q^{i}) (q^{n}-q^{i})}{(q^{r}-q^{i})}. \]
 In particular, 
 \begin{equation} \label{eq:Landsberg} a(n,r,q)=a(n \times n, r, q)= \Stirling{n}{r}_q^2 |\GL(r,F)|= \prod\limits_{i=0}^{r-1}\frac{(q^{n}-q^{i})^{2}}{(q^{r}-q^{i})}. \end{equation}
 \end{proposition}
\begin{proof} A  rank $r$ linear map $T:F^n \to F^m$ is uniquely determined by the triple of i) kernel$(T)$, ii) image$(T)$, and iii) the isomorphism $F^n/\text{kernel}(T) \to \text{image}(T)$. These  can be chosen in i) $\stirling{n}{n-r}_q$, ii) $\stirling{m}{r}_q$, and iii) $|\GL(r,F)|$ ways respectively.
\end{proof}

\begin{proposition}\label{cardinality of a(n,r,q) using trace 0 and trace 1} Let $n$ be a positive integer and let $0\leq r\leq n$. Then 
\[f_{n,r,k}^0 + (q-1) f_{n,r,k}^1 = a(n,r,q).\]
\end{proposition}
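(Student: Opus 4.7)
The plan is to partition $\M(n,r,F)$ according to the value of the $k$-trace and exploit the obvious scaling action of $F^\times$.

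First I would observe the tautological decomposition
\[
a(n,r,q) \;=\; \sum_{\alpha \in F} f_{n,r,k}^{\alpha},
\]
since every rank-$r$ matrix has a well-defined $k$-trace lying in $F$, and the sets $Y_{n,r,k}^{\alpha}$ are disjoint as $\alpha$ varies.

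Next I would note that for any $\beta \in F^\times$, the map $X \mapsto \beta X$ is a bijection $\M(n,r,F) \to \M(n,r,F)$, since multiplication by a nonzero scalar preserves rank. Because the $k$-trace $X \mapsto \sum_{i=1}^k x_{ii}$ is $F$-linear, this map sends $Y_{n,r,k}^{\alpha}$ bijectively onto $Y_{n,r,k}^{\beta\alpha}$. In particular, taking $\alpha = 1$ and letting $\beta$ range over $F^\times$, we obtain $f_{n,r,k}^{\beta} = f_{n,r,k}^{1}$ for every $\beta \in F^\times$.

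Combining these two observations gives
\[
a(n,r,q) \;=\; f_{n,r,k}^{0} + \sum_{\beta \in F^\times} f_{n,r,k}^{\beta} \;=\; f_{n,r,k}^{0} + (q-1)\, f_{n,r,k}^{1},
\]
which is the desired identity. There is no real obstacle here; the only subtle point worth mentioning is the edge case $r=0$, where the scaling argument is vacuous but the identity still holds trivially since $\M(n,0,F) = \{0\}$, giving $f_{n,0,k}^{0} = 1$ and $f_{n,0,k}^{1} = 0$.
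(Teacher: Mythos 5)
Your proposal is correct and takes essentially the same approach as the paper: both combine the tautological partition $a(n,r,q)=\sum_{\alpha\in F}f_{n,r,k}^{\alpha}$ with an explicit rank-preserving bijection showing $f_{n,r,k}^{\alpha}=f_{n,r,k}^{1}$ for all $\alpha\in F^{\times}$. The only cosmetic difference is the choice of bijection --- you scale the entire matrix by $\beta\in F^{\times}$, while the paper left-multiplies by the block matrix $K=\begin{bmatrix}\alpha^{-1}I_{k} & 0\\ 0 & I_{n-k}\end{bmatrix}$, rescaling only the first $k$ rows --- and both work equally well.
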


\begin{proof} It is clear that $ \M(n,r,F)$ can be written as the disjoint union $\displaystyle\bigsqcup_{\alpha\in F}Y_{n,r,k}^{\alpha}$. Thus, we have $\displaystyle a(n,r,q)=\sum_{\alpha\in F}f_{n,r,k}^{\alpha}$. Let $\alpha\neq 0 \in F$. It is enough to show that $f_{n,r,k}^{\alpha}=f_{n,r,k}^{1}$. Let $$Z=\begin{bmatrix}P & Q \\ R & S\end{bmatrix}\in Y_{n,r,k}^{\alpha},$$ where $P\in \M(k,F)$ and so $\tr(P)=\alpha\neq 0$. Clearly the map $\phi\colon Y_{n,r,k}^{\alpha}\rightarrow Y_{n,r,k}^{1}$ defined by $\phi(Z)=KZ$, where $$K=\begin{bmatrix} \alpha^{-1}I_{k} & 0 \\ 0 & I_{n-k}\end{bmatrix}\in \GL(n,F)$$ 
is a bijection. The result follows. 
\end{proof}

For $\alpha\in F$, recall that $f_{k,r}^{\alpha}=f_{k,r,k}^{\alpha}$. The following result is due to Prasad where he computes $f_{k,r}^{0}-f_{k,r}^{1}$. We can use this result in combination with Proposition~\ref{cardinality of a(n,r,q) using trace 0 and trace 1} to explicitly compute the numbers $f_{k,r}^{0}$ and $f_{k,r}^{1}$.

\begin{lemma}[Prasad]\label{DP}
\[ \label{eq:diff1} 
g_{k,r,k}=f_{k,r}^0 - f_{k,r}^1 = (-1)^r q^{\tbinom{r}{2}} \Stirling{k}{r}_q.\]
\end{lemma}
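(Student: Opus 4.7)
The plan is to use discrete Fourier analysis on $F$ to rewrite $f_{k,r}^0 - f_{k,r}^1$ as a character sum, and then evaluate that sum by organising matrices according to their image subspace. Fix a nontrivial additive character $\psi \colon F \to \mathbb{C}^\times$. By the orthogonality $\mathbb{1}_{x=y} = \frac{1}{q} \sum_{\chi} \chi(x-y)$ (sum over all additive characters of $F$), and using that every nontrivial $\chi$ has the form $x \mapsto \psi(\beta x)$ with $\beta \in F^\times$, I would first show
\[ f_{k,r}^0 - f_{k,r}^1 \;=\; T(r), \qquad T(r) := \sum_{X \in \M(k,r,F)} \psi(\tr X). \]
The key points are that the bijection $X \mapsto \beta^{-1} X$ of $\M(k,r,F)$ makes $\sum_{X} \psi(\beta \tr X)$ independent of $\beta \in F^\times$, and the bookkeeping sum $\sum_{\beta \in F^\times}(1 - \psi(-\beta)) = (q-1) - (-1) = q$ precisely cancels the $\tfrac{1}{q}$ coming from Fourier inversion.

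To evaluate $T(r)$, I would carry out a double counting over the image subspace. For an $m$-dimensional subspace $C \subseteq F^k$, set
\[ U(C) \;:=\; \sum_{X \in \M(k, F),\; \Image X \subseteq C} \psi(\tr X). \]
Choosing a direct sum decomposition $F^k = C \oplus C'$ writes such an $X$ as a pair $(X|_C, X|_{C'}) \in \End(C) \times \Hom(C', C)$, and $\tr X = \tr(X|_C)$. For $m \geq 1$ the trace is a nonzero linear functional on $\End(C)$, so summing $\psi \circ \tr$ over $\End(C)$ yields $0$; thus $U(C) = 0$ for $\dim C \geq 1$, while $U(\{0\}) = 1$.

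Summing $U(C)$ over all $m$-dimensional $C$ and exchanging the order of summation, I would then get
\[ \sum_{\dim C = m} U(C) \;=\; \sum_{r' = 0}^{m} \Stirling{k-r'}{m-r'}_q T(r'), \]
since each rank-$r'$ matrix has its image contained in exactly $\Stirling{k-r'}{m-r'}_q$ subspaces of dimension $m$. Combined with the vanishing $U(C)=0$, this gives the triangular system
\[ \sum_{r' = 0}^{m} \Stirling{k-r'}{m-r'}_q T(r') = 0 \quad (m \geq 1), \qquad T(0) = 1, \]
which uniquely determines all $T(r)$.

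To finish, I would verify that $T(r) = (-1)^r q^{\binom{r}{2}} \Stirling{k}{r}_q$ satisfies this system. Using the standard subspace-tower identity $\Stirling{k-r'}{m-r'}_q \Stirling{k}{r'}_q = \Stirling{k}{m}_q \Stirling{m}{r'}_q$ (both sides count flags $V_1 \subseteq V_2 \subseteq F^k$ with $\dim V_1 = r'$, $\dim V_2 = m$), the recursion reduces to
\[ \Stirling{k}{m}_q \sum_{r'=0}^{m} (-1)^{r'} q^{\binom{r'}{2}} \Stirling{m}{r'}_q \;=\; \Stirling{k}{m}_q\, (1;q)_m, \]
which vanishes for $m \geq 1$ by the $q$-binomial theorem at $X = 1$ (the factor $1 - q^0$ appears in $(1;q)_m$). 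The main obstacle is the Fourier bookkeeping in the first paragraph; once the reduction to a single character sum is in hand, the subspace-counting step and the $q$-binomial manipulation are clean and routine.
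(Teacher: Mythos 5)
Your proposal is correct, and it is genuinely different from what the paper does for this statement: the paper gives no argument at all for Lemma~\ref{DP}, deferring entirely to Lemma 2 of \cite{Dip[1]}, so your character-sum proof supplies a self-contained derivation where the text has only a citation. Every step checks out. The Fourier bookkeeping is right: in $\frac{1}{q}\sum_{\beta}\psi(\beta\tr X)\bigl(1-\psi(-\beta)\bigr)$ the $\beta=0$ term vanishes, the rank-preserving bijection $X\mapsto\beta^{-1}X$ makes the inner sum independent of $\beta\in F^{\times}$, and $\sum_{\beta\neq 0}(1-\psi(-\beta))=q$ cancels the $\tfrac1q$, giving $f^{0}_{k,r}-f^{1}_{k,r}=T(r)$; equivalently one can invoke $f^{\alpha}_{k,r}=f^{1}_{k,r}$ for $\alpha\neq 0$ (the scaling argument the paper uses in Proposition~\ref{cardinality of a(n,r,q) using trace 0 and trace 1}) and note $\sum_{\alpha\neq 0}\psi(\alpha)=-1$. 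The vanishing $U(C)=0$ for $\dim C=m\geq 1$ is sound, since in a basis adapted to $F^{k}=C\oplus C'$ a matrix with image in $C$ is the pair $(X|_{C},X|_{C'})\in\End(C)\times\Hom(C',C)$ with $\tr X=\tr(X|_{C})$, and the trace is a surjective linear functional on $\End(C)$; the overspace count $\Stirling{k-r'}{m-r'}_{q}$ is standard; the resulting system is unitriangular (the coefficient of $T(m)$ is $\Stirling{k-m}{0}_{q}=1$), hence has a unique solution; and the verification via the flag identity $\Stirling{k-r'}{m-r'}_{q}\Stirling{k}{r'}_{q}=\Stirling{k}{m}_{q}\Stirling{m}{r'}_{q}$ together with the $q$-binomial theorem at $X=1$ works because $(1;q)_{m}$ contains the factor $1-q^{0}=0$ for $m\geq 1$. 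As for comparison: your triangular system is in essence M\"obius inversion on the subspace lattice, and it is methodologically orthogonal to the bordering recursions and generating-function manipulations the paper uses for its new results; it yields the closed form in one shot and also makes the paper's reformulation $g_{k,k}(X)=(X;q)_{k}$ in Equation~\eqref{GF for n=k} conceptually transparent, since $\sum_{r}T(r)X^{r}$ is forced to be the $q$-Pochhammer polynomial by the same $q$-binomial identity. The paper's choice (citation) buys brevity; yours buys self-containment and a reusable technique (the same character-sum-plus-image-subspace device would, for instance, recover the $k$-trace generalization after conjugating $A$ to the block identity form of Proposition 2.1).
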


\begin{proof} For a proof, we refer the reader to Lemma 2 in \cite{Dip[1]}.
\end{proof}

We can reformulate Lemma~\ref{DP}, using generating functions and the $q$-binomial theorem as follows.   

\begin{lemma} \label{DP_T}
The polynomial generating function $g_{k,k}(T)= \displaystyle \sum_{r \geq 0} g_{k,r,k} T^r \in \mathbb{Z}[T]$ equals:
\begin{equation} \label{GF for n=k}
g_{k,k}(T)=(1-T)(1-qT)\cdots(1-q^{k-1}T) = (T;q)_{k}.
\end{equation}
\end{lemma}

\section{main theorem}
In this section, we prove the main result of this paper. We continue with the same notation as in the previous section.

\begin{lemma} \label{recursion} $f^\alpha_{n,r,k}$ satisfies the recursion 
\[f^\alpha_{n,r,k}=f^\alpha_{n-1,r,k}q^{2r} + f^\alpha_{n-1,r-1,k}q^{2r-2}(2q^{n-r+1}-1-q) + f^\alpha_{n-1,r-2,k}q^{2r-3}(q^{n-r+1}-1)^{2}\]
for $n>k$.
\end{lemma}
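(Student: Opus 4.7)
The plan is to imitate the block-decomposition argument used in the two preceding lemmas, but now peeling off the last row and column instead of a row/column inside the $k$-block. The crucial observation that makes this clean is that since $n>k$, the $k$-trace depends only on the upper-left $k\times k$ block, which is entirely contained in the $(n-1)\times(n-1)$ submatrix obtained by deleting the last row and column. So the trace condition factors through the block decomposition without interaction.

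Concretely, I would write
\[X=\begin{bmatrix} D & v \\ w & x\end{bmatrix},\qquad D\in\M(n-1,F),\ v\in F^{n-1},\ w\in F^{n-1},\ x\in F,\]
set $\tilde v=\begin{bmatrix} v\\ x\end{bmatrix}\in F^n$, and observe $\sum_{i=1}^k x_{ii}(X)=\sum_{i=1}^k x_{ii}(D)=\alpha$. Since deleting a row and a column can change the rank by at most $2$, $\Rank(D)\in\{r,r-1,r-2\}$. For each value I would count the number of pairs $(w,\tilde v)$ making $\Rank(X)=r$, using the standard trick: let $V\subset F^{n-1}$ be the column span of $D$ and $W\subset F^n$ be the column span of $\begin{bmatrix} D\\ w\end{bmatrix}$.

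If $\Rank(D)=r$, then $\Rank(X)=r$ forces the projection $W\to V$ to be an isomorphism, so $w$ is determined by a linear functional on $V$ ($q^r$ choices) and $\tilde v\in W$ ($q^r$ choices), giving the leading term $q^{2r}f^\alpha_{n-1,r,k}$. If $\Rank(D)=r-1$, I split into subcases: either $w$ comes from a functional on $V$ (so $\Rank\begin{bmatrix} D\\ w\end{bmatrix}=r-1$, with $q^{r-1}$ choices for $w$ and then $q^n-q^{r-1}$ choices for $\tilde v\notin W$), or it does not (so $\Rank\begin{bmatrix} D\\ w\end{bmatrix}=r$, with $q^{n-1}-q^{r-1}$ choices for $w$ and then $q^r$ choices for $\tilde v\in W$). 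Adding these and simplifying yields $q^{2r-2}(2q^{n-r+1}-1-q)$ as the coefficient of $f^\alpha_{n-1,r-1,k}$. If $\Rank(D)=r-2$, then both the bottom row and the right column must genuinely raise the rank: $w\notin$ (graph of functional on $V$) gives $q^{n-1}-q^{r-2}$ choices, and $\tilde v\notin W$ gives $q^n-q^{r-1}$ choices, producing the coefficient $q^{2r-3}(q^{n-r+1}-1)^2$ of $f^\alpha_{n-1,r-2,k}$.

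Summing the three cases gives the stated recursion. The only subtle step is the counting in Case 2 (rank of $D$ equals $r-1$), where one must be careful to classify $w$ by whether or not the augmented column matrix $\begin{bmatrix} D\\ w\end{bmatrix}$ has rank $r-1$ or $r$; the arithmetic $q^{r-1}(q^n-q^{r-1})+(q^{n-1}-q^{r-1})q^r=q^{2r-2}(2q^{n-r+1}-1-q)$ is the main obstacle to watch for algebra slips, but it is otherwise routine. The hypothesis $n>k$ is used exactly once, to ensure the $k$-trace of $X$ equals that of $D$ so that the recursion stays within the family $f^\alpha_{\cdot,\cdot,k}$ with the same $\alpha$.
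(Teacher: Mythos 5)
Your proposal is correct and takes essentially the same route as the paper: the paper likewise peels off the last row and column, observes that $n>k$ makes the $k$-trace of $X$ equal to that of $D$, and then repeats verbatim the three-case count on $\Rank(D)\in\{r,\,r-1,\,r-2\}$ from its $k=n-1$ lemma, with the same subcase split when $\Rank(D)=r-1$ and the same coefficients $q^{2r}$, $q^{2r-2}(2q^{n-r+1}-1-q)$, and $q^{2r-3}(q^{n-r+1}-1)^{2}$. Your reformulation of the row-space membership of $w$ via linear functionals on $V$ is a harmless rephrasing, and your arithmetic checks match the paper's.
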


\begin{proof} Let $X \in Y_{n,r,k}^{\alpha}$. Then, $X$ is of the form \[X=\begin{bmatrix} D & v \\ w & x \end{bmatrix}\] where $x \in F$, $v \in F^{n-1}$,  $w$ is a row vector of length $(n-1)$, and $D \in Y^\alpha_{n-1,s,k}$ for some $s \in  \{r,r-1,r-2\}$. We denote the $n \times 1 $ column vector $ \begin{bmatrix} v \\ x \end{bmatrix}$ by $\tilde{v}$. Let $W$ denote the row space of $D$, and let $V$ denote the column space of the 
$n \times (n-1)$ matrix $\begin{bmatrix} D \\ w \end{bmatrix}$.
We consider the cases  $s=r$, $s=r-1$, and $s=r-2$ one by one.
\vspace{0.2 cm}\\
In case $s=r$,  the condition for $X$ to have rank $r$ is that  $w \in W$ and $\tilde{v} \in V$.  Thus, given $D\in Y_{k,r}^{0}$, we get $q^r$ choices each, for the vectors $w$ and $\tilde{v}$ such that $X\in Y_{k+1,r,k}^{0}$. Thus, there are in total $q^{2r}$ choices for the pair $(w, \tilde{v})$ so that $X \in Y^\alpha_{n,r,k}$.
\vspace{0.2 cm}\\
In case $s=r-1$, then  $X$ has rank $r$ if either i) $w \in W$ and $\tilde{v} \notin V$, or ii) $w \notin W$ and $\tilde{v} \in V$. The number of choices for $w$ and $\tilde{v}$ are $q^{r-1}$ and $(q^n-q^{r-1})$ in case i), and 
$(q^{n-1}-q^{r-1})q^r$ in case ii). Thus, there are in total $(2 q^{n+r-1}- q^{2r-2} - q^{2r-1})$ choices for the pair $(w, \tilde{v})$ so that $X \in Y^\alpha_{n,r,k}$.
\vspace{0.2 cm}\\
In case $s=r-2$,   the condition for $X$ to have rank $r$ is that  $w \notin W$ and $\tilde{v} \notin V$. The number of choices for $w$ and $\tilde{v}$ are $(q^{n-1}-q^{r-2})$ and $(q^n-q^{r-1})$ respectively. Thus, there are $(q^{n-1}-q^{r-2})(q^n-q^{r-1})$ choices for the pair $(w, \tilde{v})$ so that $X \in Y^\alpha_{n,r,k}$.
\vspace{0.2 cm}\\
It follows that 
\[ f^\alpha_{n,r,k}=f^\alpha_{n-1,r,k} q^{2r} +f^\alpha_{n-1,r-1,k} q^{2r-2}(2 q^{n-r+1}-1-q) + f^\alpha_{n-1,r-2,k} q^{2r-3}(q^{n-r+1}-1)^2.
\]
\end{proof}

For $1\leq k \leq n$ and $\alpha\in F$, consider the polynomial generating function $f^{\alpha}_{n,k}(T)\in \mathbb{Z}[T]$ defined by 
\begin{equation}\label{eq:definition of f(n,k, alpha)(T)}
f^{\alpha}_{n,k}(T) = \sum_{r\geq 0}f^{\alpha}_{n,r,k} T^{r}.
\end{equation}

\begin{lemma} \label{fnkT_recurrence} For $n>k$, we have 
\[f_{n,k}^\alpha(T)=  (1-T)(1-qT)f_{n-1,k}^\alpha(q^{2}T) + 2q^n T(1-T) f_{n-1,k}^\alpha(qT)  +  q^{2n-1} T^{2}  f_{n-1,k}^\alpha(T) . \]
\end{lemma}

\begin{proof} Using the recursive expression for $f^\alpha_{n,r,k}$ of Lemma  \ref{recursion} in \eqref{eq:definition of f(n,k, alpha)(T)}, we get:
\begin{align}\label{eq:recursion for fn,k}
\notag f_{n,k}^\alpha(T) &= \sum_{r \geq 0} \bigg(f^\alpha_{n-1,r,k} (q^{2}T)^{r} +Tf^\alpha_{n-1,r-1,k} (q^{2} T)^{r-1} (2q^{n-r+1}-1-q) \\
&+ qT^{2}f^\alpha_{n-1,r-2,k} (q^{2}T)^{r-2}(q^{n-r+1}-1)^{2}\bigg). 
\end{align}
In terms  of the index $t=r-1$, the  second term in the above sum can be rewritten as
\begin{multline*}
    2Tq^n\sum_{t \geq 0} (qT)^{t} f^{\alpha}_{n-1,t,k} - (1+q)T \sum_{t \geq 0} (q^{2}T)^{t} f^{\alpha}_{n-1,t,k} =\\
2Tq^{n}f_{n-1,k}^{\alpha}(qT)-(1+q)Tf_{n-1,k}^{\alpha}(q^{2}T).
\end{multline*} 
 Similarly, in terms of the index $s=r-2$,
 the third term in the sum can be rewritten as 
  \begin{multline*}
\sum_{s \geq 0} \left( q^{2n-1} T^{2} f_{n-1,s,k}^{\alpha} T^{s}
+ T^{2} q f_{n-1,s,k}^{\alpha}(q^{2}T)^{s} - 2q^{n}T^{2}f^{\alpha}_{n-1,s,k}(qT)^{s}\right) \\ =
 T^{2}q^{2n-1}f_{n-1,k}^{\alpha}(T)+qT^{2}f_{n-1,k}^{\alpha}(q^{2}T)
-2q^{n}T^{2}f_{n-1,k}^{\alpha}(qT).
 \end{multline*} 
Therefore, \eqref{eq:recursion for fn,k} can be written as 
\[
f_{n,k}^{\alpha}(T) = (1-T)(1-qT)f_{n-1,k}^{\alpha}(q^{2}T) + 2Tq^{n}(1-T)f_{n-1,k}^{\alpha}(qT) +T^{2}q^{2n-1}f_{n-1,k}^{\alpha}(T).\]
\end{proof}

Since  $a(n,r,q) = f^0_{n,r,0}$, it follows that the polynomial generating function 
\[ \displaystyle A_{n}(T) = \sum_{r \geq 0} a(n,r,q)T^{r}\in \mathbb{Z}[T], \]
of the numbers $a(n,r,q)$ (for fixed $n$), satisfies the recurrence:
\begin{equation} 
\label{eq:rec_A}  A_n(T)=  A_{n-1}(q^2T) (1-T)(1-qT) + 2q^nT(1-T)A_{n-1}(qT)  +  q^{2n-1} T^2 A_{n-1}(T), \quad n>0.
\end{equation}

Let $g_{n,r,k}=f_{n,r,k}^{0}-f_{n,r,k}^{1}$. Consider the polynomial generating function:
\[g_{n,k}(T)= \sum_{r \geq 0} g_{n,r,k}T^{r}= f_{n,k}^{0}(T)-f_{n,k}^{1}(T)\in \mathbb{Z}[T],\]
where $f^\alpha_{n,k}(T)$ is as defined in \eqref{eq:definition of f(n,k, alpha)(T)}.
It follows from Lemma \ref{fnkT_recurrence} that 
 $g_{n,k}(T)$ also obeys the same recursion as $f_{n,k}^\alpha(T)$:
 \begin{equation} \label{eq:rec_g}  
g_{n,k}(T) =  (1-T)(1-qT) g_{n-1,k}(q^2T)  + 2q^nT(1-T) g_{n-1,k}(qT)+  q^{2n-1} T^2 g_{n-1,k}(T). 
\end{equation}

For $n=k$, the polynomial $g_{n,k}(T)$ equals $(T;q)_k$ by Lemma \ref{DP_T}. In the next Lemma, we use \eqref{eq:rec_g} to  determine 
$g_{n,k}(T)$ for $n=k+1$ and $n=k+2$.

\begin{lemma}\label{g_k+1} We have
\[\frac{g_{k+1,k}(T)}{(T;q)_{k}} = 1 + (q-1)Tq^{k}= A_1(q^{k}T).\]
and 
\[\frac{g_{k+2,k}(T)}{(T;q)_{k}} = 1 + (q^{2}-1) \left[(Tq^{k})(q+1) +  (Tq^{k})^{2}(q^{2}-q)\right]=A_{2}(q^{k}T). \]
\end{lemma}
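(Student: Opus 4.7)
The plan is to apply the recurrence \eqref{eq:rec_g} directly, starting from the base case $g_{k,k}(X)=(X;q)_k$ given by \eqref{GF for n=k}. The key algebraic observation is that the $q$-Pochhammer symbol behaves well under dilation of $X$: we have $(q^2X;q)_k\,(1-X)(1-qX)=(X;q)_{k+2}$ and $(qX;q)_k\,(1-X)=(X;q)_{k+1}$. Consequently, when the recurrence is applied to $g_{k,k}(X)=(X;q)_k$, the factor $(X;q)_k$ can be pulled out cleanly from all three terms using the identities $(X;q)_{k+2}=(X;q)_k(1-q^kX)(1-q^{k+1}X)$ and $(X;q)_{k+1}=(X;q)_k(1-q^kX)$.

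Carrying this out for $n=k+1$, I would write
\[
\tfrac{g_{k+1,k}(X)}{(X;q)_k}=(1-q^kX)(1-q^{k+1}X)+2Xq^{k+1}(1-q^kX)+X^2q^{2k+1},
\]
then expand and collect. The coefficient of $X^2$ is $q^{2k+1}-2q^{2k+1}+q^{2k+1}=0$, and the coefficient of $X$ collapses to $(q-1)q^k$, giving $1+(q-1)q^kX$. Noting that $a(1,0,q)=1$ and $a(1,1,q)=q-1$, so $A_1(X)=1+(q-1)X$, the first identity $g_{k+1,k}(X)/(X;q)_k=A_1(q^kX)$ follows.

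For $n=k+2$, I would substitute $g_{k+1,k}(X)=(X;q)_k[1+(q-1)q^kX]$ into \eqref{eq:rec_g}. The same Pochhammer identities let $(X;q)_k$ be factored out, reducing the problem (after the substitution $Y=q^kX$) to verifying
\[
\tfrac{g_{k+2,k}(X)}{(X;q)_k}=(1-Y)(1-qY)[1+(q-1)q^2Y]+2q^2Y(1-Y)[1+(q-1)qY]+q^3Y^2[1+(q-1)Y].
\]
Expanding each factor and grouping by powers of $Y$, the constant term is $1$, the $Y^3$ coefficient is $(q^4-q^3)-2(q^4-q^3)+(q^4-q^3)=0$, and routine bookkeeping yields $(q^2-1)(q+1)$ for the coefficient of $Y$ and $(q^2-1)(q^2-q)$ for the coefficient of $Y^2$. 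Since $a(2,1,q)=(q^2-1)(q+1)$ and $a(2,2,q)=|\GL(2,F)|=q(q-1)(q^2-1)=(q^2-1)(q^2-q)$, this matches $A_2(Y)=1+(q^2-1)[(q+1)Y+(q^2-q)Y^2]$ evaluated at $Y=q^kX$.

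The computation is entirely mechanical; the only place requiring care is the cubic-in-$Y$ expansion for $g_{k+2,k}$, where one must verify the exact cancellation of the $Y^3$ terms, so I would present that expansion in tabular form to keep coefficient tracking transparent. No conceptual obstruction is expected, since the base case and the recurrence fully determine the answer.
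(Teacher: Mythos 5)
Your proposal is correct and takes essentially the same approach as the paper: both apply the recurrence \eqref{eq:rec_g} to the base case $g_{k,k}(X)=(X;q)_k$ and use the dilation identities $(1-X)(1-qX)\,(q^2X;q)_k=(X;q)_k(1-q^kX)(1-q^{k+1}X)$ and $(1-X)\,(qX;q)_k=(X;q)_k(1-q^kX)$ to factor out $(X;q)_k$, and your coefficient computations (including the $Y^3$ cancellation and the matches with $a(2,1,q)=(q^2-1)(q+1)$ and $a(2,2,q)=(q^2-1)(q^2-q)$) check out. The only cosmetic difference is the final step for $g_{k+2,k}$: the paper simply recognizes the resulting expression as the right-hand side of the recurrence \eqref{eq:rec_A} for $A_2$ evaluated at $q^kX$, whereas you verify the equality by direct expansion, which is equally valid.
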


\begin{proof} Using the fact that $g_{k,k}(T) =(T;q)_{k}$, Equation \eqref{eq:rec_g} becomes 
\[
g_{k+1,k}(T)=  (1-T)(1-qT)(q^2T;q)_k + 2T(1-T)q^{k+1}(q T;q)_k +T^{2}q^{2k+1}  (T;q)_k. \]
Thus 
\begin{align*}
\frac{g_{k+1,k}(T)}{(T;q)_{k}} &= (1-q^{k}T)(1-q^{k+1}T)+2Tq^{k+1}(1-q^{k}T)+T^{2}q^{2k+1}\\
\vspace{0.2 cm}\\
&= 1+Tq^{k}(q-1) = A_1(q^kT) ,
\end{align*}
because $A_{1}(T)=1+(q-1)T$.
\vspace{0.2 cm}\\
Similarly, $g_{k+2,k}(T)$ equals 
\[  A_{1}(q^{k+2}T)(q^{2}T;q)_{k}(1-T)(1-qT)
+ 2q^{k+2}T(1-T) (qT;q)_{k} A_{1}(q^{k+1}T) + T^{2}q^{2k+3}A_{1}(qT)(T;q)_{k}\]
Therefore, $\tfrac{g_{k+2,k}(T)}{(T;q)_{k}}$ equals
\[
A_{1}(q^{k+2}T)(1-q^{k}T)(1-q^{k+1}T)+2Tq^{k+2}A_{1}(q^{k+1}T)(1-q^{k}T)\\
+q^{3}A_{1}(q^{k}T)(q^{k}T)^{2}. \]
Using $A_{1}(T)=1+(q-1)T$, this simplifies to 
\[\frac{g_{k+2,k}(T)}{(T;q)_{k}} = 1 + (q^{2}-1) \left[(Tq^{k})(q+1) +  (Tq^{k})^{2}(q^{2}-q)\right]=A_{2}(q^{k}T), \]
because $A_2(T)=1+(q+1)^2(q-1) T + (q^2-1)(q^2-q)T^2$.
\end{proof}

This suggests the following theorem. 

\begin{theorem}\label{g_(n,k)}
\[ g_{n,k}(T) = (T;q)_k \; A_{n-k}(q^k T). \]
\end{theorem}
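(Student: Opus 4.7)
The plan is to prove the identity by induction on $n - k \geq 0$, using the recursion \eqref{eq:rec_g} for $g_{n,k}(X)$ for the inductive step and the recursion \eqref{eq:rec_A} for $A_m(X)$ to recognize the result.

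For the base case $n = k$, the equation \eqref{GF for n=k} gives $g_{k,k}(X) = (X;q)_k$, and $A_0(X) = 1$ since the only matrix in $\M(0,F)$ is the empty one of rank $0$. Hence both sides equal $(X;q)_k$. (The case $n = k+1$ is already computed in Lemma~\ref{g_k+1} and serves as a sanity check.)

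For the inductive step, assume $g_{n-1,k}(X) = (X;q)_k \, A_{n-1-k}(q^k X)$ and substitute this into \eqref{eq:rec_g} at the three arguments $X$, $qX$, and $q^2 X$. The key algebraic manipulation is the telescoping identity for the $q$-Pochhammer symbol:
\[
(q^2 X; q)_k \,(1-X)(1-qX) = (X;q)_k \,(1 - q^k X)(1 - q^{k+1}X),
\]
\[
(qX; q)_k \,(1-X) = (X;q)_k \,(1 - q^k X),
\]
which follow immediately from $(X;q)_k = \prod_{i=0}^{k-1}(1 - q^i X)$. Using these to factor $(X;q)_k$ out of every term, one obtains
\[
g_{n,k}(X) = (X;q)_k \Bigl[ (1 - q^k X)(1 - q^{k+1}X)\,A_{n-1-k}(q^{k+2}X) + 2 X q^{n}(1 - q^k X)\,A_{n-1-k}(q^{k+1}X) + X^2 q^{2n-1} A_{n-1-k}(q^k X) \Bigr].
\]

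Finally, set $Y = q^k X$, so that $q^{k+1}X = qY$, $q^{k+2}X = q^2 Y$, $2Xq^n = 2Y q^{n-k}$, and $X^2 q^{2n-1} = Y^2 q^{2(n-k) - 1}$. The bracket becomes
\[
A_{n-1-k}(q^2 Y)(1 - Y)(1 - qY) + 2Y(1-Y) q^{n-k} A_{n-1-k}(qY) + Y^2 q^{2(n-k) - 1} A_{n-1-k}(Y),
\]
which is precisely the right-hand side of the recursion \eqref{eq:rec_A} with $n$ replaced by $n - k$ and $X$ replaced by $Y$. Hence the bracket equals $A_{n-k}(Y) = A_{n-k}(q^k X)$, completing the induction.

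The only real obstacle is the bookkeeping of the $q$-shifts in the Pochhammer factors when pulling $(X;q)_k$ outside the bracket; once that telescoping is done, the change of variables $Y = q^k X$ aligns the three terms precisely with \eqref{eq:rec_A} because the shift in $n$ (from $n$ to $n-k$) exactly matches the prefactor $q^{n-k}$ and the exponent $2(n-k)-1$.
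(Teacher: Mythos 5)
Your proof is correct and takes essentially the same route as the paper's: induction with base case $g_{k,k}(X)=(X;q)_k$ from \eqref{GF for n=k}, substitution of the inductive hypothesis into \eqref{eq:rec_g}, extraction of the common factor $(X;q)_k$ via the telescoping Pochhammer identities, and recognition of the remaining bracket as the recursion \eqref{eq:rec_A} for $A_{n-k}$ evaluated at $Y=q^kX$. Your explicit checks (that $A_0(X)=1$ and that $2Xq^n=2Yq^{n-k}$, $X^2q^{2n-1}=Y^2q^{2(n-k)-1}$ under the substitution) are exactly the bookkeeping the paper's proof performs implicitly.
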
 

\begin{proof} We use induction on $n$. The base case is $k=n$ which is true by Lemma~\ref{DP_T}. Assume inductively that the theorem is true for $g_{n-1,k}(T)$. We have
\begin{align*}
g_{n,k}(T)&= (1-T)(1-qT)g_{n-1,k}(q^{2}T) + 2q^{n}T(1-T)g_{n-1,k}(qT) + q^{2n-1}T^{2} g_{n-1,k}(T)\\
&= (1-T)(1-qT)(q^2T;q)_k A_{n-k-1}(q^{k+2}T)  +2q^nT(1-T)(qT;q)_k A_{n-k-1}(q^{k+1} T) \\
&\qquad + q^{2n-1} T^2(T;q)_k \; A_{n-k-1}(q^k T) .\end{align*}
Therefore, 
\begin{align*}
\frac{g_{n,k}(T)}{(T;q)_k} &=
 (1-q^kT)(1-q^{k+1}T)A_{n-k-1}(q^{k+2}T)+ 2Tq^{n}(1-q^{k}T)A_{n-k-1}(q^{k+1}T)\\
& + A_{n-k-1}(q^{k}T)T^{2}q^{2n-1}. 
\end{align*}
In terms of the recurrence relation \eqref{eq:rec_A} for $A_m(Y)$  where $Y=q^kT$ and $m=n-k$, we see that the above expression is just
\[ A_{n-k}(q^kT).\]
This completes the proof of the theorem. 
\end{proof}

\subsubsection*{Proof of Theorem \ref{main theorem}} \hfill  \\
 From Theorem \ref{g_(n,k)} and Theorem \ref{qbin_thm},  it follows that 
\[g_{n,k}(T)=\sum_{i \geq 0} \sum_{j \geq 0} (-1)^{i} q^{\tbinom{i}{2}+kj} \Stirling{k}{i}_{q}a(n-k,j,q)T^{j+i}.\]
We can rewrite this as
$g_{n,k}(T)= \displaystyle \sum_{r \geq 0}  g_{n,r,k} \,T^r$, where:
\[g_{n,r,k} = \sum_{i=0}^r 
(-1)^i q^{\tbinom{i}{2} +k(r-i)} \,\Stirling{k}{i}_q \,  a(n-k,r-i,q).
\]
It follows from Proposition~\ref{cardinality of a(n,r,q) using trace 0 and trace 1} that
\begin{align*}
  f_{n,r,k}^1 &=  ( a(n,r,q)-g_{n,r,k})/q,  \\ 
  f_{n,r,k}^0 &=  ( a(n,r,q)+(q-1) g_{n,r,k})/q.   
  \end{align*}
This completes the proof of Theorem \ref{main theorem}.  \qed
\begin{remark} We can use the results obtained above and recover some earlier known results as particular cases of our result. \\
\begin{enumerate}
\item[a)] When $k=1$,
\begin{small} 
\begin{equation} 
\label{g(n,r,1)} g_{n,r,1}=\sum_{i=0}^{1} (-1)^i q^{\tbinom{i}{2}+(r-i)} \stirling{1}{i}_q a(n-1,r-i,q)=q^ra(n-1,r,q)-q^{r-1}a(n-1,r-1,q).
\end{equation}
\end{small}
Clearly, this is the same as the formula obtained in Lemma 3.5 in \cite{KumHim[3]}. \\

\item[b)] When $k=n$, we have that \[
a(n-k,r-i,q) = \begin{cases}
1 & \text{if}~~ r=i \\
0 & \text{otherwise.}
\end{cases}\]
Thus, we get
\begin{equation} 
\label{g(n,r,n)} g_{n,r,n}=(-1)^r q^{\tbinom{r}{2}} \Stirling{n}{r}_q
\end{equation}
and we recover the formula obtained in Lemma 2 in \cite{Dip[1]}.\\

\item[c)] By Proposition \ref{cardinality of a(n,r,q) using trace 0 and trace 1}, it is easy to see that the difference \[f_{n,r,k}^{0}-f_{n,r,k}^{1}=\sum_{X \in \M(n,r,F)} \psi(\tr(AX))\]
if $\psi$ is a fixed non-trivial additive character of $F$. For $r=n$ and any arbitrary $k$, we obtain that 
\[
g_{n,n,k} = \sum_{i=0}^{k} (-1)^i q^{\tbinom{i}{2}+k(n-i)} \Stirling{k}{i}_q a(n-k,n-i,q).\]
Since $a(n-k,n-i,q)=0$ for $n-i > n-k$, we get that 
\begin{align*}
g_{n,n,k} &= (-1)^k q^{\tbinom{k}{2}+k(n-k)} \Stirling{k}{k}_q a(n-k,n-k,q)\\
&= (-1)^k q^{\tbinom{n}{2}}\prod_{t=1}^{n-k}(q^t-1). 
\end{align*}
Thus we recover the formula obtained in Theorem 2.1 in \cite{LiHu}, and \cite[eq. 1.150]{Stanley1}. 
\end{enumerate}
\end{remark}
\section{The case of rectangular matrices}
The problem of determining the cardinality of 
\[ Z^{\alpha}_{A,r}=  \{X \in \M(n,r,F) \colon \tr(AX)=\alpha \},\]
has a natural generalization to $m \times n$ matrices.  Let
$\M(m \times n,F)$ be the 
 vector space  of $m \times n$ matrices with entries in $F$, and let 
$\M(m \times n,r, F)$ denote the subset of $\M(m \times n,F)$ consisting of matrices of  rank $r$. For a $n \times m$ matrix $A$ over $F$, let $Z^{\alpha}_{A,r}$ again denote (with a little abuse of notation) 
\[ Z^{\alpha}_{A,r}= \{X \in \M(m \times n,r,F) \colon \tr(AX)=\alpha \}.\]
The problem is to determine the cardinality of $Z^{\alpha}_{A,r}$.
 If $k$ denotes rank$(A)$, then just as in Proposition \ref{cardinality of Z(A,r,alpha) is f(n,r,k,alpha)}, the cardinality of $Z^{\alpha}_{A,r}$ is the same as the cardinality of the set 
\[ Y^\alpha_{m\times n,r,k}= \{X \in \M(m\times n,r,F) : X_{11}+\dots+X_{kk}=\alpha \}. \]
Clearly the transpose operation gives a bijection between  $Y^\alpha_{m\times n,r,k}$
and $Y^\alpha_{n\times m,r,k}$, so we assume $m \leq n$ without loss of generality. We recall from Proposition \ref{Landsberg} that \[ |\M(m\times n,r,F)|=a(m \times n,r,q)=\stirling{m}{r}_q \stirling{n}{r}_q |\GL(r,F)|.\] 
We define:
\begin{align*}
    f^\alpha_{m\times n,r,k} &=|Y^\alpha_{m\times n,r,k}|\\
   g_{m\times n,r,k}&=f^0_{m\times n,r,k}- f^1_{m\times n,r,k} \\
   g_{m\times n,k}(T)&=\sum_{r=0}^m 
   g_{m\times n,r,k} T^r  \\
   A_{m \times n}(T)&=\sum_{r=0}^m a(m \times n,r,q) T^r.  
   \end{align*}
   Moreover, as in Proposition \ref{cardinality of a(n,r,q) using trace 0 and trace 1}, we have 
\begin{align} \label{eq:cardinality of a(mxn,r,q) using trace 0 and trace 1} 
 a(m \times n , r, q)&=f^0_{m \times n,r,k} + (q-1) f^1_{m \times n, r, k},\\
  \nonumber f_{m\times n,r,k}^1 &=  ( a(m\times n,r,q)-g_{m\times n,r,k})/q,  \\ 
  \nonumber f_{m \times n,r,k}^0 &=  ( a(m\times n,r,q)+(q-1) g_{m\times n,r,k})/q.   
  \end{align}
Therefore, it suffices to determine the quantities $g_{m \times n, r, k}$, or effectively the polynomial generating function $g_{m \times n, k}(T)$:

\begin{theorem}\label{g_(mxn,k)}
\[ g_{m\times n,k}(T) = (T;q)_k \; A_{(m-k) \times (n-k)}(q^k T). \]
\end{theorem} 
In other words
\begin{equation} \label{eq:gmnrk}
    g_{m \times n,r,k} = \sum_{i=0}^r (-1)^i \stirling{k}{i}_q \,  q^{\tbinom{i}{2} + k(r-i)} \, a( (m-k)\times (n-k),r-i,q).
    \end{equation}
The range of summation in \eqref{eq:gmnrk} can be taken to be $\text{max}\{0, k+r-m\} \leq i \leq \text{min}\{r,k\}$, as the terms outside this range are zero. We will prove Theorem \ref{g_(mxn,k)}  by 
 induction on $m$. The base case is $m=k$ which is treated in the next lemma. It states the  non-obvious fact that $g_{k \times n, r, k}$  is independent of $n$ for all $n \geq k$: 
\begin{lemma} \label{DP_n}
\[g_{k\times n,r,k} = g_{k,r,k} = (-1)^r q^{\binom{r}{2}} \stirling{k}{r}_q, \quad \forall \, n \geq k.\]
\end{lemma}
\begin{proof}
    If $A \in Y^\alpha_{k,i,k}$ 
 i.e., $A_{k \times k}$ 
  has rank $i$ and trace $\alpha$, then the number of matrices  $ X=[ A_{k \times k} \mid B_{k \times n-k} ] $ having  rank $r$ equals $q^{(n-k)i}$ times   $a((k-i) \times (n-k), r-i,q)$, where $a((k-i) \times (n-k), r-i,q) = |\M((k-i) \times (n-k), r-i, F)|$.
Therefore,  
  \[f^\alpha_{k\times n,r,k}
=\sum_i f^\alpha_{k,i,k} \cdot  
 q^{(n-k)i} \,  a((k-i) \times (n-k), r-i,q).\]
It follows that 
  \begin{multline*}
g_{k\times n,r,k}
=\sum_i g_{k,i,k} \cdot  
 q^{(n-k)i} \,  a((k-i) \times (n-k), r-i,q) \\ =  \sum_i (-1)^i q^{\binom{i}{2}} \cdot  
 q^{(n-k)i}  \stirling{k}{i}_q \stirling{k-i}{r-i}_q \stirling{n-k}{r-i}_q |\GL(r-i,F)|,\end{multline*}
where we have used Lemma \ref{DP}. Using the identity $\stirling{k}{i}_q \stirling{k-i}{k-r}_q=\stirling{k}{r}_q \stirling{r}{i}_q$, we see that 
\[ \tilde{g}_{k\times n,r,k} :=  \frac{g_{k\times n,r,k}}{\stirling{k}{r}_q}=
\sum_i (-1)^i q^{\binom{i}{2}}  \stirling{r}{i}_q  \stirling{n-k}{r-i}_q |\GL(r-i,F)|  q^{(n-k)i}, 
\]
for a fixed $r$, only depends on $n-k$ for all $k, n$ with $n \geq k \geq r$. We will prove the assertion in the statement  by induction on $n$. The base case $n=k$ is true for all $k$. We inductively assume that the assertion holds for $n-1$. Since $\tilde{g}_{k\times n, r, k} = \tilde{g}_{(k-1) \times (n-1), r, (k-1)}$ which by the inductive hypothesis equals 
  \[ \frac{g_{(k-1)\times (n-1),r,k-1}}{\stirling{k-1}{r}_q}  = \frac{(-1)^r q^{\binom{r}{2}} \stirling{k-1}{r}_q}{\stirling{k-1}{r}_q}=(-1)^r q^{\binom{r}{2}}, \]
  we have shown that 
   \[ \frac{g_{k\times n,r,k}}{\stirling{k}{r}_q}=(-1)^r q^{\binom{r}{2}},\]
  as required.
  \end{proof}
 \begin{proof} (of Theorem \ref{g_(mxn,k)}) \hfill \\
 Having proved the base case $m=k$, of the result, we  
 assume inductively that the theorem is true for $g_{m\times n,k}(T)$. 
  Let 
$X = \begin{bmatrix}
        B_{m \times n}\\ w_{1 \times n} 
    \end{bmatrix}$ be an element of  $Y^\alpha_{(m+1) \times n,r,k}$. Then, either 
$B \in Y^\alpha_{m \times n,r,k}$ with $w$ in the row space of $B$, or $B \in Y^\alpha_{m \times n,r-1,k}$ with $w$ not in the row space of $B$. Thus
\[ f^\alpha_{(m+1) \times n, r, k}= q^r f^\alpha_{m \times n, r, k} +(q^n-q^{r-1}) f^\alpha_{m \times n, r-1, k}. \]
In terms of the generating function $f^\alpha_{(m+1) \times n,  k}(T)$, we have the recursive relation
\[ f^\alpha_{(m+1) \times n,  k}(T)=
f^\alpha_{m \times n,  k}(qT) +q^n T f^\alpha_{m \times n,  k}(T) - T f^\alpha_{m \times n,  k}(qT). \]
We may rewrite this as
\begin{equation} \label{eq:recursion for fmxn,k}
    f^\alpha_{(m+1) \times n,  k}(T)=
(1-T) f^\alpha_{m \times n,  k}(qT) +q^n T f^\alpha_{m \times n,  k}(T) , \quad m \geq k. \end{equation}
Since  $g_{m\times n,r,k}=f^0_{m\times n,r,k}- f^1_{m\times n,r,k}$ we have
\begin{equation} \label{eq:recursion for g(mxn,k)}
g_{(m+1) \times n,  k}(T)=
(1-T) g_{m \times n,  k}(qT) +q^n T g_{m \times n,  k}(T). 
\end{equation}
We make another observation from 
\eqref{eq:recursion for fmxn,k}:
Since $a(m\times n, r,q) = f^0_{m\times n, r, 0}$, it follows that $A_{(m+1) \times n}(T) = f^0_{(m+1)\times n, 0}(T)$ satisfies the recursion:
\begin{equation} \label{eq:recursion for a(mxn,k)}
    A_{(m+1) \times n}(T)=
(1-T) A_{m \times n}(qT) +q^n T A_{m \times n}(T) , \quad m \geq 0. \end{equation}

Using the inductive hypothesis in \eqref{eq:recursion for g(mxn,k)}, we get:
\[
g_{(m+1) \times n,  k}(T)=
(1-T) (qT;q)_k A_{(m-k) \times (n-k)}(q^{k+1}T)
+q^n T (T;q)_k A_{(m-k) \times (n-k)}(q^k T).
\]
We may rewrite this as:
\[
\frac{g_{(m+1) \times n,  k}(T)}{(T;q)_k}=
 (1-q^kT) A_{(m-k)\times (n-k)}(q (q^{k}T))
+ q^{n-k} (q^kT)   A_{(m-k)\times(n-k)}(q^k T).\]

Using \eqref{eq:recursion for a(mxn,k)} in the right hand side of the above equation, we get: 

\[ \frac{g_{(m+1) \times n,  k}(T)}{(T;q)_k}=
A_{(m+1-k)\times (n-k)}(q^k T)
\]
This completes the proof of the theorem. 
\end{proof}
\subsection{A different proof of Theorem \ref{g_(mxn,k)}}
The induction in the proof of Theorem \ref{g_(mxn,k)} was on the parameter $m$ with $k$ being fixed. We now develop a recurrence for $g_{m \times n, r, k}$ which does not fix any of the parameters $m,n,r,k$, and use it to obtain a  short proof of Theorem \ref{g_(mxn,k)}. 

\begin{lemma} The quantities $f^{\alpha}_{m \times n, r, k}$ and $g_{m \times n, r, k}$ obey the recurrence:
\begin{align} \label{eq:f_new_recur}
  f^\alpha_{(m+1) \times (n+1), r+1, k+1}&= f^\alpha_{m\times n, r+1, k} \,q^{r+1} - f^\alpha_{m\times n, r, k} \,q^r\\  
  \nonumber &\quad +(a(m\times (n+1), r+1,q)-a(m \times n, r+1, q))q^{r}\\ 
  \nonumber &\quad +  a(m\times (n+1), r,q)(q^{n} -q^{r-1}) + a(m\times n, r,q)\,q^{r-1}
  .\\
 \label{eq:g_new_recur} g_{(m+1) \times (n+1), r+1, k+1}&=
   g_{m\times n, r+1, k}\, q^{r+1} - g_{m\times n, r, k} \, q^r.     
  \end{align}
    \end{lemma}
    \begin{proof}
        The recurrence \eqref{eq:g_new_recur} 
         immediately follows from the recurrence \eqref{eq:f_new_recur}. So it suffices to prove \eqref{eq:f_new_recur}.
         Given $X \in \M((m+1) \times (n+1), r+1, F)$ of the form 
         \[X= \begin{pmatrix}  w\\U
            \end{pmatrix},
         \]
         with 
        $w \in \M(1 \times (n+1), F)$, and  
         $U \in \M( m \times (n+1),F)$. We note that either 1) $U$ has rank $r+1$ or 2) $U$ has rank $r$. In each of these two cases we consider the subcases a) the first column of $U$ is not the zero vector, b) the first column of $U$ is  the zero vector. Let $\tau_U = U_{12}+ U_{23}+ \dots +U_{k,k+1}$. We note that the number of matrices $U$ of rank $s$ with $\tau_U=\beta$ is $f^\beta_{m \times (n+1), s, k}$ because $\tau_U=\tr(AU)$ for some  $(n+1) \times m$
matrix $A$ of rank $k$.\\
         
         In case 1a), the condition for $X$ to be in $Y^\alpha_{(m+1)\times(n+1), r+1, k+1}$ is that  
         $w$ must be in the row space of $U$ with $w_1 = \alpha - \tau_U$. Thus, there are $q^r$ choices for $w$, for each of the  $a(m \times (n+1), r+1,q)-a(m \times n, r+1,q))$ choices for $U$.\\
         
         In case 1b), the condition for $X$ to be in $Y^\alpha_{(m+1)\times(n+1), r+1, k+1}$ is that  
         $w$ must be in the row space of $U$, and $\tau_U$ must be $\alpha$. Thus, the number of choices for $w$ is $q^{r+1}$, for each of the $f^\alpha_{m \times n, r+1,k}$ choices of $U$.\\
         
         In case 2a), the condition for $X$ to be in $Y^\alpha_{(m+1)\times(n+1), r+1, k+1}$ is that  
         $w$ is a vector  not in the row space of $U$, and  $w_1 = \alpha - \tau_U$. The number of choices for $w$ is $(q^{n}-q^{r-1})$, for each of the $(a(m \times (n+1), r,q) - a(m \times n,r,q))$ choices for $U$.\\

         In case 2b), the condition for $X$ to be in $Y^\alpha_{(m+1)\times(n+1), r+1, k+1}$ is that  
         $w$  must not be in the row space of $U$
         and $w_1=\alpha-\tau_U$. 
 If $\tau_U=\alpha$, then there are $(q^n-q^r)$ choices for $w$ for each of the $f^\alpha_{m \times n, r, k}$ choices of $U$. 
 If $\tau_U \neq \alpha$, then there are $q^n$ choices for $w$, for each of the $a(m \times n, r,q)-f^\alpha_{m \times n, r, k}$ choices of $U$. \\
 
 Thus $f^\alpha_{ (m+1)\times (n+1),r+1,k+1}$ equals
 \begin{multline*}
     q^r(a(m \times (n+1), r+1,q)-a(m \times n, r+1,q))
     + q^{r+1} f^\alpha_{m \times n, r+1,k}\\
     + (q^{n}-q^{r-1}) 
     (a(m \times (n+1), r,q) - a(m \times n,r,q)) \\
     +(q^n-q^r) f^\alpha_{m \times n, r, k}
     +q^n (a(m \times n, r,q)-f^\alpha_{m \times n, r, k}),    
 \end{multline*} 
which is the same as \eqref{eq:f_new_recur}.          \end{proof}
Using \eqref{eq:g_new_recur} in the generating function
\[g_{(m+1)\times(n+1),k+1}(T)=1 + \sum_{r \geq 0} g_{(m+1)\times(n+1),r+1,k+1} T^{r+1}, \]
we get 
\begin{align*}
g_{(m+1)\times(n+1),k+1}(T)&=1 + \sum_{r \geq 0} T^{r+1} (g_{m\times n, r+1, k} q^{r+1} - g_{m\times n, r, k} q^r) \\
&= g_{m\times n,k}(qT)-Tg_{m\times n,k}(qT)=(1-T)g_{m\times n,k}(qT)
\end{align*}
The recurrence $g_{(m+1)\times(n+1),k+1}(T)=(1-T)g_{m\times n,k}(qT)$ repeated $k$ more times  gives
\begin{align*} g_{(m+1)\times(n+1),k+1}(T)&=(1-T)(1-qT)\cdots (1-q^kT) g_{(m-k)\times(n-k),0}(q^{k+1}T) \\ &=(T;q)_{k+1} A_{(m-k)\times(n-k)}(q^{k+1}T), 
\end{align*}
which is the same as
\[ g_{m\times n,k}(T) = (T;q)_k \; A_{(m-k) \times (n-k)}(q^k T).  \qed \]

\subsection{Remarks on literature:}
While revising this paper, we came to know that the problem of determining $f^\alpha_{m \times n, r, k}$ and $g_{m \times n, r, k}$ has been studied in the coding theory community, for example by Delsarte \cite{Delsarte1}, Ravagnani \cite{Ravagnani}, and Beelen-Ghorpade  \cite{Beelen_Ghorpade}. The approach of Delsarte realizes $g_{m \times n, r ,k}$  
 as the eigenvalues of a certain   \emph{association scheme} on $\M(m \times n, F)$ with rank as metric.
The formula for $g_{m \times n, r ,k}$  
obtained in \cite[Theorem A.2]{Delsarte1}  is 
\begin{equation} \label{eq:gmnrk_Delsarte}
g_{m \times n, r ,k}=\sum_{i=0}^m (-1)^{r-i} q^{in + \tbinom{r-i}{2}} \stirling{m-i}{m-r}_q    \stirling{m-k}{i}_q
\end{equation}
In \cite[Theorem 65]{Ravagnani}, Ravagnani gives another proof of \eqref{eq:gmnrk_Delsarte}, as an application of the  MacWilliams identities for Delsarte rank-metric codes.
More recently, Beelen and Ghorpade, while studying codes associated to determinantal varieties, obtain the following  formula (\cite[Proof of Theorem A.7]{Beelen_Ghorpade}): 
\begin{equation} \label{eq:gmnrk_Beelen_Ghorpade}
g_{m \times n, r ,k}=\sum_{i=0}^k 
(-1)^i q^{\tbinom{i}{2}+i(m-k)} \stirling{k}{i}_q \stirling{m-k}{r-i}_q  
\prod_{t=0}^{r-i-1} (q^{n-i}-q^t),  
\end{equation}
Their approach is elementary,  uses only linear algebra and induction. They also show using results of \cite{Delsarte2} that \eqref{eq:gmnrk_Beelen_Ghorpade} is 
equivalent to \eqref{eq:gmnrk_Delsarte}. \\

We now show that our formula 
\eqref{eq:gmnrk} is equivalent to 
\eqref{eq:gmnrk_Delsarte}. In \cite[p.267]{Delsarte2}, it is shown that  the following two expressions represent the same function (known as a $q$-Kravchuk polynomial $F_c(k,r,m) \in \mathbb{R}[q]$ with a real parameter $c>q^{-1}$:
\begin{align*}
F_c(k,r,m)&=\sum_{i=0}^r 
(-1)^{r-i}  (cq^m)^i  q^{\tbinom{r-i}{2}} \stirling{m-i}{m-r}_q \stirling{m-k}{i}_q \\
&=\sum_{i=0}^r 
(-1)^{i}  q^{\tbinom{i}{2}} \stirling{k}{i}_q \stirling{m-k}{r-i}_q  \prod_{t=0}^{r-i-1} (cq^m-q^{k+t}). 
\end{align*}
For $c=q^{n-m}$, the first expression yields 
\eqref{eq:gmnrk_Delsarte}, whereas the second expression yields \eqref{eq:gmnrk}.
 
 \section{Some properties of $g_{m \times n,r,k}$}  
An interesting question is to determine how the sizes
$|Z^{\alpha}_{A,r}|$ depend on $\alpha$. For example, if $A \neq 0$ then
the set $\{X \in  \M(m\times n, F) \colon \tr(AX)=0\}$ is a codimension one linear subspace of the vector space $\M(m \times n , F)$, and hence has cardinality $q^{mn-1}$. This cardinality is also  the sum 
\[ \sum_{r=0}^m |Z^{\alpha}_{A,r}| = \sum_{r=0}^m f^\alpha_{m \times n, r, k},\] and hence $\displaystyle \sum_{r=0}^m(f^0_{m \times n, r, k}-f^1_{m \times n, r, k}) = \sum_{r=0}^m g_{m \times n, r, k}$ must be zero. This can be seen as follows:  $\displaystyle\sum_{r=0}^m g_{m \times n, r, k}$ is the evaluation of polynomial
\[ g_{m \times n, k}(T) = (T;q)_k A_{(m-k) \times (n-k)}(T), \] 
at $T=1$, and $(T;q)_k=(1-T)(1-qT) \cdots (1-q^{k-1}T)$  evaluates to $0$ at $T=1$ for $k>0$.\\

We recall that 
$f^\alpha_{m \times n, r, k}=f^1_{m \times n, r, k}$  for all $\alpha \in F\setminus \{0\}$. Thus, the quantity 
\begin{equation}
    h_{m\times n,r,k} = \frac{g_{m\times n ,r,k}}{a(m\times n, r, q)},
\end{equation}
is a measure of equidistribution of the function $\alpha \mapsto |Z^{\alpha}_{A,r}|$ where rank$(A)=k$. Some natural questions arise here: \begin{enumerate}
    \item How does the sign of $h_{m \times n, r, k}$ depend on the parameters $m,n,r,k$?
    \item Can we find an upper  bound on  $|h_{m \times n, r, k}|$?\\
    For fixed $m,n,r,k$ what is the asymptotic behaviour of  $|h_{m \times n, r, k}|$ as $q \to \infty$?
\end{enumerate}
 We will answer these questions.  A non-obvious property of $h_{m \times n, r, k}$ is that it is symmetric in $r$ and $k$: 
\begin{lemma} [Delsarte \cite{Delsarte1, Delsarte2}]
\label{h_symmetry} \[
    h_{m\times n,r,k} = h_{m\times n,k,r}.\]
\end{lemma}
\begin{remark} Before going to our proof of this result, we quickly explain Delsarte's  proof.  
Delsarte showed in \cite[p.240]{Delsarte1} and \cite[eqn. (16)]{Delsarte2}, that $h_{m\times n,r,k}$ can be expressed in the terms of the \emph{basic hypergeometric function} ${}_2\phi_2$ (the definition of ${}_2\phi_2$ is as in references [1], [13] of \cite{Delsarte2}):
\begin{equation}
    h_{m\times n,r,k} = {}_2\phi_2( (\begin{smallmatrix}
        q^{-r}, & q^{-k} \\ q^{-m}, & q^{-n}
    \end{smallmatrix}); q;q)=\sum_{j \geq 0} \frac{(q^{-r};q)_j (q^{-k};q)_j q^j}{(q^{-m};q)_j (q^{-n};q)_j (q;q)_j}
\end{equation}
It is clear that the above expression is symmetric in $r$ and $k$. We can rewrite this expression as the 
 alternating sum:
\[ 
    h_{m\times n,r,k} =\sum_{j \geq 0} \frac{ q^{j(m+n+1-r-k)} \,  a(r \times k, j,q)}{(q;q)_j \,  a(m \times n, j,q)}. 
\]
\end{remark}
We now return to our proof of Lemma \ref{h_symmetry}.
\begin{proof} 
Dividing the expression  \eqref{eq:gmnrk} for  $g_{m\times n,r,k}$ by $a(m \times n, r,q)$ we get 
\[h_{m\times n,r,k}=
\sum_{i=0}^{\text{min}\{r,k\}} (-1)^i   q^{\tbinom{i}{2} + k(r-i)}
\frac{\stirling{k}{i}_q \stirling{m-k}{r-i}_q \stirling{n-k}{r-i}_q |\GL(r-i,F)|}{\stirling{m}{r}_q \stirling{n}{r}_q |\GL(r,F)|}
 \]
The ratio in the right hand side can be simplified as 
\[\frac{ 
\dfrac{(q^{k-i+1};q)_i}{(q;q)_i} \cdot 
\dfrac{(q^{m+i+1-r-k};q)_{r+k-i}}{(q^{m-k+1};q)_{k} (q;q)_{r-i}} \cdot 
\dfrac{(q^{n+i+1-r-k};q)_{r+k-i}}{(q^{n-k+1};q)_{k} (q;q)_{r-i}} \cdot 
q^{\binom{r-i}{2}} (q;q)_{r-i} (-1)^{r-i}}
{ \dfrac{(q^{m-r+1};q)_{r}}{(q:q)_r}  \cdot 
\dfrac{(q^{n-r+1};q)_{r}}{(q:q)_r} \cdot 
q^{\binom{r}{2}} (q;q)_{r} (-1)^{r}}.
\]
This can be rewritten as 
\[ \frac
{ 
(q^{k-i+1};q)_i (q^{r-i+1};q)_i
(q^{m+i+1-r-k};q)_{r+k-i}
(q^{n+i+1-r-k};q)_{r+k-i}
}
{
(-1)^i q^{\binom{i}{2} + i(r-i)} (q;q)_i (q^{n-r+1};q)_{r} (q^{m-r+1};q)_{r} (q^{m-k+1};q)_{k}
(q^{n-k+1};q)_{k}
}\]
Therefore $h_{m\times n,r,k}$ equals
\begin{equation} \label{eq:hmnrk_symmetric}
 \displaystyle\sum_{i=\text{max}\{0,r+k-m\}}^{\text{min}\{r,k\}}  \!\!\!\!\frac{ \dfrac{q^{(k-i)(r-i)}}{(q;q)_i}
(q^{k-i+1};q)_i (q^{r-i+1};q)_i
(q^{m+1+i-r-k};q)_{r+k-i}
(q^{n+1+i-r-k};q)_{r+k-i}
}
{ (q^{m-r+1};q)_{r} (q^{m-k+1};q)_{k}
(q^{n-r+1};q)_{r} (q^{n-k+1};q)_{k}
},
\end{equation}
which is clearly symmetric in $r$ and $k$.
\end{proof}

\begin{lemma} \label{alternating_sum}  For $q>2$, the terms in the alternating sum 
\eqref{eq:gmnrk}
\[     g_{m \times n,r,k}=\sum_{i=\mathrm{max}\{0, k+r-m\}}^{\mathrm{min}\{r,k\}}
\!\!\!\!(-1)^i 
 \stirling{k}{i}_q \,  q^{\tbinom{i}{2} + k(r-i)} \, a( (m-k)\times (n-k),r-i,q),\]
are decreasing in absolute value.
\end{lemma}
\begin{proof}
The absolute value of the ratio of the $(i+1)$-th term to the $i$-th term in the sum above simplifies to 
\begin{equation}
    \label{eq:ratio}
 \frac{
(1-\dfrac{1}{q^{k-i}})(1-\dfrac{1}{q^{r-i}})
}
{ \dfrac{(q^{i+1}-1)(q^{m-k-r+i+1}-1)}{q}  (q^{n-k-r+i+1}-1)},    \end{equation}
for  $i$ in the range 
\[ \text{max}\{0, k+r-m\} \leq i \leq \text{min}\{r,k\}-1. \]
In this range the numerator of \eqref{eq:ratio} is clearly less than $1$, and hence it suffices to show that the denominator is $\geq 1$. There are two possible cases:  $k+r > m$ or  $k+r \leq m$.\\

In case $k+r >m$,  then $i \geq k+r-m \geq 1$, and hence  
\[ (q^{n-k-r+i+1}-1)  \geq (q^{m-k-r+i+1}-1)\geq (q-1) \geq 1,\]
and 
\[(q^{i+1}-1)/q \geq (q^2-1)/q >1,\]
which shows that the denominator is $ \geq 1$.\\

If $k+r \leq  m$, then 
\[ (q^{n-k-r+i+1}-1)  \geq (q^{m-k-r+i+1}-1)\geq (q^{i+1}-1) \geq 1,\]
and 
\[\frac{(q^{i+1}-1)(q^{m-k-r+i+1}-1)}{q} \geq \frac{(q^{i+1}-1)^2}{q} \geq  \frac{(q-1)^2}{q} > 1,\]
which shows that the denominator is $\geq 1$, when $q>2$. If $q=2$, the denominator is $\geq 1$ unless $n=m=r+k$ and $i=0$.\\

Suppose $q=2$ and $m=n=r+k$. Writing \eqref{eq:gmnrk} as 
\[g_{m \times n, r, k} = e_0-e_1+e_2- \dots +(-1)^r e_r,\]
we have shown $\frac{e_{i+1}}{e_i} < 1$ for $i\geq 1 $. As for $\frac{e_1}{e_0}$, we note that 
\[\frac{e_1}{e_0} =
2
(1-\frac{1}{2^{k}})(1-\frac{1}{2^{r}}),\]
is $<1$ only if $(k-1)(r-1)=0$. In other words, $\frac{e_1}{e_0}> 1$ if and only if $m=n=r+k$ and $r,k \geq 2$. In this case 
\[  e_0-e_1 < g_{m \times n, r, k} < e_0-e_1 +e_2.\]
We have
\begin{multline*}
    \frac{e_0-e_1 +e_2}{|\GL(r,F)| \, 2^{kr}}=1-\frac{(2^k-1)(2^r-1)}{2^{m-1}}+\frac{(2^k-1)(2^r-1)(2^{k-1}-1)(2^{r-1}-1)}{  2^{2m-4}\, 27 } \\ < 1- \frac{25(2^k-1)(2^r-1)}{2^{m-1} 27}, \end{multline*}
where we have used the fact that $(2^{k-1}-1)(2^{r-1}-1)<2^{k+r-2}=2^{m-2}$. Since $k+r=m$ and $k,r \geq 2$, the minimum value of $(2^k-1)(2^r-1)$ is $3(2^{m-2}-1)$, and hence
\[1- \frac{25(2^k-1)(2^r-1)}{2^{m-1} 27}< \frac{-7}{18} 
 +\frac{25 }{18 \cdot 2^{m-2}} 
 < \frac{-1}{24},\]
where we have used the fact that $m=r+k \geq 4$. Thus $e_0-e_1+e_2<0$, which shows that $g_{m \times n , r, k}<0$ in this case.
\end{proof}

\begin{proposition}  For $q>2$, we have:\label{h_sign}
     \begin{equation} \label{eq:h_sign}
     \mathrm{sgn}(h_{m\times n,r,k})= (-1)^{\mathrm{max}\{r+k-m,0\}}
     \end{equation}
     (If we do not assume $m \leq n$, then $m$ should be replaced by   $\mathrm{min}\{m,n\}$ in this assertion.)\\
     If $q=2$, the result still holds unless $m=n=r+k$ and $r,k \geq 2$, in which case $\mathrm{sgn}(h_{m\times n,r,k})= -1$.
     \end{proposition}
\begin{proof}
Since $h_{m \times n,r,k}=g_{m\times n,r,k}/a(m\times n,r,q)$, it suffices to show that $g_{m\times n,r,k}$ has the asserted sign. If $q>2$, then by  Lemma \ref{alternating_sum}, the terms of the alternating sum \eqref{eq:gmnrk} are decreasing in absolute value, and hence the sign of $g_{m \times n,r,k}$ is the sign of the first (non-zero) term in the alternating sum. This is the term with index $i=\ell$ where $\ell=\text{max}\{0, k+r-m\}$. Thus, sgn$(g_{m\times n,r,k})= (-1)^{\ell}$.\\

If $q=2$, we have shown in the last part of the proof of Lemma \ref{alternating_sum}, that \eqref{eq:h_sign}  is violated if and only if $m=n=r+k$ and $r,k \geq 2$.
\end{proof}
In fact, Lemma \ref{alternating_sum} can be used to give an upper bound on  the absolute value of $h_{m \times n , r,k}$:
\begin{theorem}
Let $q>2$, and let $\ell=\mathrm{max}\{0,k+r-m\}$.
\begin{equation} \label{eq:h_bound}
    |h_{m \times n , r,k}| \leq q^{-kr -\ell(n-m) + \binom{\ell}{2}} \, 
\frac{(\frac{1}{q^k};q)_{\ell} \, (\frac{1}{q^r};q)_{\ell} \, (\frac{1}{q^m};q)_{r+k-\ell} \,(\frac{1}{q^n};q)_{k+r-\ell}}{(\frac{1}{q^\ell};q)_{\ell} \,(\frac{1}{q^m};q)_{k} \, (\frac{1}{q^m};q)_{r} \, (\frac{1}{q^n};q)_{r} \,(\frac{1}{q^n};q)_{k}}. 
\end{equation}
In particular,
\begin{equation} \label{eq:h_asymptotic_bound}
  |h_{m \times n , r,k}|= O( q^{-kr -\ell(n-m) +\binom{\ell}{2}} )  \quad \text{ as $q \to \infty$}.  
\end{equation}
\end{theorem}
\begin{proof}
   We note that all the Pochhammer symbols in the right hand side of  \eqref{eq:h_bound} lie in the interval $(0,1]$ and tend to $1$ as $q \to \infty$:  this is because for natural numbers  $a<b$ we have 
   \[(\tfrac{1}{q^b};q)_a = (1-\tfrac{1}{q^b})(1-\tfrac{1}{q^{b-1}}) \cdots (1-\tfrac{1}{q^{b-a+1}}).\]
Therefore, the assertion \eqref{eq:h_asymptotic_bound} readily follows from    \eqref{eq:h_bound}. As for the assertion in \eqref{eq:h_bound}, we again use the fact that the expression \eqref{eq:hmnrk_symmetric} is an alternating sum  of decreasing terms (by Lemma \ref{alternating_sum}),  and hence $|h_{m \times n,r,k}|$  is bounded above by the absolute value of the first (nonzero) term of the sum, which is the term with index $i=\ell$.
Using the identity $(q^a;q)_b= (\frac{1}{q^{a+b-1}};q)_b (-1)^b q^{ \binom{b}{2}+ab}$ 
 for natural numbers $a,b$, we can express the absolute value of the $i$-th term of  \eqref{eq:hmnrk_symmetric} as
\[ (\tfrac{1}{q})^{\left(rk -\binom{i}{2} +i(i-(r+k-m)) + i(i-(r+k-n)) \right)} \,  \frac{(\frac{1}{q^k};q)_{i} \, (\frac{1}{q^r};q)_{i} \, (\frac{1}{q^m};q)_{r+k-i} \,(\frac{1}{q^n};q)_{k+r-i}}{(\frac{1}{q^i};q)_{i} \, (\frac{1}{q^m};q)_{k} \, (\frac{1}{q^m};q)_{r} \, (\frac{1}{q^n};q)_{r} \,(\frac{1}{q^n};q)_{k}}. 
\]
Taking $i=\ell$, we obtain \eqref{eq:h_bound}.

\end{proof}
We remark that the exponent 
\begin{equation}
E=kr +\ell(n-m) -\binom{\ell}{2}    
\end{equation}
 of $\tfrac{1}{q}$ in \eqref{eq:h_asymptotic_bound} is positive: if $\ell=0$, we have 
 $E=kr >0$. If $\ell=k+r-m$, then we  proceed as follows: by symmetry of $E$ in $r, k$, we may assume $r \leq k$ without loss of generality. We can express  
\[E=\frac{r(2k-r+1)}{2} + \binom{m-k}{2} +\ell(m-k) + \ell(n-m),\]
which is non-negative because $n \geq m \geq k \geq r \geq 0$. 
\bibliographystyle{amsplain}
\bibliography{cardinality}

\end{document}